\documentclass{article}
\usepackage[utf8]{inputenc}
\usepackage{graphicx,amsrefs}
\usepackage{amsmath,amsthm,amssymb,mathtools}
\usepackage{comment,enumerate}


 \newcommand\figref{Figure~\ref}

 \usepackage{comment,amsmath,gensymb,booktabs}
\usepackage{dsfont}\let\mathbb=\mathds

\usepackage{standalone}
\interdisplaylinepenalty=2500
\usepackage{mathtools,amsmath}
\usepackage{graphicx}
\usepackage{enumerate}
\usepackage{tikz}
\usepackage{color}
\usepackage{comment}
\usepackage{bibentry}
\usepackage{pgfplots}
\usetikzlibrary{pgfplots.groupplots}

\usetikzlibrary{fit}
\pgfplotsset{compat=1.9}
\usetikzlibrary{positioning}

\providecommand{\norm}[1]{\lVert#1\rVert}
\newcommand{\red}{\textcolor{red}}

\usepackage{graphicx,amsthm}
\usepackage{footnote}
\begin{document}
\title{On Closed-form H-infinity Optimal Control \\ and Large-scale Systems}
\author{Carolina~Bergeling, Richard Pates 
    and Anders~Rantzer \footnote{C. Bergeling, R. Pates and A. Rantzer are with the Department
of Automatic Control, Lund University, Box 118, SE-221 00 Lund, Sweden. Phone: +46 46 222 87 87. e-mail: carolina.bergeling@control.lth.se, rantzer@control.lth.se, richard.pates@control.lth.se. Fax: +46 46 13 81 18. The authors are members of the LCCC Linnaeus Center and the ELLIIT Excellence Center at Lund University. Corresponding author: C. Bergeling, carolina.bergeling@control.lth.se.}}
\maketitle


\begin{abstract}
We address a class of systems for which the solution to an H-infinity optimal control problem can be given on a very simple closed form. In fact, both the control law and optimal performance value are explicitly given. The class of systems include models for large-scale systems such as temperature dynamics in buildings, buffer networks and transportations systems. Furthermore, the structure of the control law is suitable for distributed control of such large-scale systems, which is illustrated through examples. 
\end{abstract}


\newtheorem{theo}{Theorem}
\setcounter{theo}{0}
\newtheorem{lem}{Lemma}
\setcounter{lem}{0}
\newtheorem{example}{Example}
\setcounter{example}{0}
\newtheorem{remark}{Remark}
\setcounter{remark}{0}
\newtheorem{coro}{Corollary}
\setcounter{coro}{0}
\newtheorem{defin}{Definition}
\setcounter{defin}{0}
\newtheorem{ass}{Assumption}
\setcounter{ass}{0}
\newtheorem{problem}{Problem}
\setcounter{problem}{0}
\newtheorem{definition}{Definition}
\setcounter{definition}{0}
\newtheorem{prop}{Proposition}
\setcounter{prop}{0}

%

\section{Introduction}
%
%
%
%

Classical methods for controller synthesis are often inadequate for large-scale systems. That is, systems with large numbers of sensors and actuators, often with limited information sharing. This is in part due to their computational complexity. Furthermore, the controllers they render do rarely comply with the apparent interconnection-restrictions.

The fields of decentralized and distributed control have emerged as a response to this inadequacy. The work on quadratic invariance has given tools for identifying control problem with structural constraints, as those that result from interconnection restrictions, that are convex \cite{lessard2016convexity,rotkowitz2006characterization}. However, even if a structured control problem can be posed as a convex problem, it does not mean that it is easy to solve for large-scale systems. This is addressed in \cite{dhingra2016sparsity} where the invariant properties of the considered system are utilized in order to gain convexity and computational efficiency.  On a similar note, \cite{tanaka2011bounded,rantzer2015scalable} treat distributed and scalable control of positive systems, wherein the methods scale well as the system's size grows. However, these methods do not guarantee optimality. 

Spatially invariant systems are treated in \cite{bamieh2002distributed,bamieh2005convex}, where it is shown that the classical synthesis approaches can result in controllers that are also spatially invariant. That is, the structure of the plant is inherited by the controller, which could be beneficial in large-scale applications. Another approach to distributed design is to give requirements on system components, such that a global performance criterion can be met regardless of their interconnection. One can also use a certain notion of separability of the constrained large-scale optimal control problem as described in \cite{wang2017separable}, \cite{wang2016system} to show that the synthesis problem can be solved locally, and thus via parallel computation. In \cite{pates2016scalable}, a slightly different approach is taken where optimality is not of main focus. Instead, it is shown that completely independent design of control laws, by standard synthesis techniques, using only local information can be used as long as each controller meets a certain requirement. The performance degradation, when only local information is available in synthesis, is investigated in \cite{delvenne2006price}. In some cases, it is show to be at least twice the global optimum, i.e., when the global information of the system is available in synthesis. 

We address the inapplicability of $H_{\infty}$ synthesis for large-scale systems. In particular, the problem of $H_{\infty}$ optimal control. If an $H_{\infty}$ optimal control problem is solvable, it can have several optimal solutions. That is, there exists several optimal control laws. Naturally, they will have different properties. As already mentioned, the structural sparsity and computational complexity of the controller are of importance in the design of controllers for large-scale systems. Therefore, the choice of an optimal controller is crucial. 

We present a $H_{\infty}$ control law on closed-form, that is shown to be optimal for a large class of systems. This class includes many models of large-scale such as those that describe the dynamics of temperature distribution in buildings, buffer systems and transportation networks. The control law is in no need of computation as it is explicitly given. This is a rarity in $H_{\infty}$ control. Furthermore, it complies with the interconnection properties of the considered system, and can be shown to be suitable for distributed control. We will illustrate this result by means of the following example. Consider a control system comprised of $N$ subsystems with dynamics described by
\begin{equation*} 
\begin{aligned}
\dot{x}_i &= -a_ix_i + \sum_{(i,j) \in {\mathcal{E}}}u_{ij}-u_{ji}+w_i,
\end{aligned} 
\end{equation*}
where $x_i$ is the state of subsystem $i$, $w_i$ is a disturbance and the control inputs $u_{ij}$ are to be designed. Furthermore, $(i,j)$ is in the set $\mathcal{E}$ if and only if subsystems $i$ and $j$ are interconnected. Define $(w)_i = w_i$, $(x)_i = x_i$ and $(u)_k = u_{ij}$ and let $G_K(s)$ be the transfer matrix from $w$ to $x$ and $u$ when $u = Kx$. Then, the norm $\|G_K\|_{\infty}$ is minimized by $K$ that corresponds to
\begin{align*}
u_{ij} = -x_i/a_i+x_j/a_j,
\end{align*}
if $a_i > 0$ for all $i \in \{1,\dots, N\}$. In fact, we show that for a large class of systems $\dot{x} = Ax+Bu+w$, where $A$ and $B$ are matrices of appropriate dimensions, the control law $u = B^TA^{-T}x$ is optimal. The control law in the example above can be given on this form. Note that the control law comply with the interconnection policy of the system.
The general result covers a larger class of systems than that considered above. However, it still provides a control law that is \textit{distributed} and \textit{scalable} with regard to the connectivity of the system. That is, distributed in the sense that each control input only needs local information and scalable as one can add an extra control input without the need to update the existing~ones.

It is important to point out that our results are restricted to a certain class of systems and problems. This is in contrast to many of the methods reviewed above for distributed control. However, we present the system properties needed for the proposed control law to be optimal. In \cite{lidstrom2016optimal}, a preliminary version of the result of the optimal control law was given. Furthermore, treated in discrete time in~\cite{lidstromDiscrete}. However, it only considers a subclass of the systems considered here. Namely, systems with certain symmetry. In this work, we go beyond such systems. The general idea behind the synthesis procedure of our distributed control law is to consider a certain relaxation of the $H_{\infty}$ problem. In particular, a least-squares type problem in $L_2$. This as the solution might render a controller that have properties suitable for control of large-scale systems, such as sparsity and simple dynamics. Furthermore, we specify classes of systems and problem setups for which equality holds between the original problem and the relaxation.  

\subsection{Outline}
Section \ref{sec:recipe} describes the considered class of systems and problem form. Furthermore, it states the result on the proposed control law and the criteria for it to be optimal. Section \ref{sec:bottleneck} covers important cases for which the criteria are readily fulfilled. 
The scope of the results are discussed in Section~\ref{sec:scope}. Finally, in Section~\ref{sec:discussion}, the results are compared to existing methods for both general and distributed $H_{\infty}$ control. 

\subsection{Notation and preliminaries}
The set of real numbers is denoted $\mathbb{R}$, the space of {${n\textrm{-by-}m}$} real-valued matrices is denoted $\mathbb{R}^{n \times m}$ and the identity matrix is written as $I$.
The spectral norm of a matrix $M$ is denoted $\norm{M}$. A square matrix $M \in \mathbb{R}^{n \times n}$ is said to be Hurwitz if all eigenvalues have negative real part. Furthermore, for a square symmetric matrix $M$, $M \prec 0$ ($M \preceq 0$) means that $M$ is negative (semi)definite while $M \succ 0$ ($M \succeq 0$) means $M$ is positive (semi)definite. We denote the conjugate transpose of a matrix $M\in \mathbb{C}^{m\times n}$ by $M^*$. The pseudoinverse of a matrix $M\in \mathbb{C}^{m\times n}$ is denoted by $M^{\dagger}$. Note that if $MM^*$ is invertible, then $M^{\dagger}$ can be given as $M^{\dagger}= M^*(MM^{*})^{-1}$.

We consider several function spaces and refer to \cite{zhou1996robust}, \cite{lax} for their definitions and the norms on these spaces. The norm on $L_2$ is simply denoted by $\|\cdot\|$. The norm on $H_{\infty}$ is denoted $\|\cdot\|_{\infty}$ and for the sake of clarity, we include its definition
$${\|F\|_{\infty} = \sup_{\textrm{Re}(s)>0} \, \|F(s)\| = \textrm{sup}_{\omega \in \mathbb{R} } \, \|F(j\omega)\|}.$$ The real rational subspace of $H_{\infty}$ is denoted by $RH_{\infty}$ and consists of all proper and real rational stable transfer matrices. 

\section{A recipe for closed-form $H_{\infty}$ optimal control}
\label{sec:recipe}
Consider the following linear and time-invariant system 
\begin{equation} \label{sysss}
\begin{aligned}
\dot{x}(t) &= Ax(t)+Bu(t)+w(t),\\ 
y(t) &= x(t),
\end{aligned}
\end{equation}
for $t\geq 0$, where $x(t)\in \mathbb{R}^n$ is the state of the system, ${u(t) \in \mathbb{R}^m}$ is the control input, $w(t) \in \mathbb{R}^l$ is a disturbance and $y(t) \in \mathbb{R}^{k}$ is the output of the system. In this case the output is the entire state. Moreover, $A$ and $B$ are real-valued matrices of appropriate dimensions. The system \eqref{sysss} can be described in the frequency domain by the following equation 
\begin{equation} \label{gensys}
M(s)\hat{y}=N(s)\hat{u}+\hat{w},
\end{equation}
with $M(s) = sI-A$ and $N(s) = B$ and where $\hat{y}$, $\hat{u}$ and $\hat{w}$ are the Laplace transform of the signals $y$, $u$ and $w$, respectively. We address the following $H_{\infty}$ optimal control problem
\begin{equation} \label{problemhinf}
\inf_{K\in RL_{\infty}} \; \left\|\begin{bmatrix}I\\K \end{bmatrix}(M-NK)^{-1} \right\|_{\infty}.
\end{equation}
It can be written as the following optimization problem
\begin{equation} \label{mainproblem}
\begin{aligned}
\inf_{\substack{ \hat{u} = K\hat{y}, \, K\in RL_{\infty} \text{ stab.}}} \quad &\|\hat{y}\|^2+\|\hat{u}\|^2 \\
\text{subject to} \quad & M(s)\hat{y}=N(s)\hat{u}+\hat{w}\\ \quad& \|\hat{w}\|\leq 1,
\end{aligned}
\end{equation}
only that the optimal value of \eqref{problemhinf} is the squareroot of the optimal value of \eqref{mainproblem}. To clarify, the problem is to design a stabilizing controller $K$, if any exists,  such that the $H_{\infty}$ norm of the closed-loop system from $w$ to $y$ and $u$ is minimized.  

In the following, we will consider any causal and well-posed linear and time-invariant systems from $w$ and $u$ to $y$ that can be written on the form \eqref{gensys} with matrix-valued rational functions $M(s)$ and $N(s)$. However, in many of the examples given in the following section, the system will be on the form \eqref{sysss}. Furthermore, $M(j\omega)$ is assumed to have full column rank for all $\omega \in \mathbb{R}$. If it does not have full column rank, there will exists a problem with lower dimension on $\hat{y}$ with the same performance value. Finally, we assume that the inverse of the complex matrix 
$M(j\omega)M(j\omega)^*+N(j\omega)N(j\omega)^*$
exists for all $\omega \in \mathbb{R}$.
Note that in the case with \eqref{sysss}, the invertibility assumption is fulfilled if the system is stabilizable.\footnote{Stabilizability of \eqref{sysss} is equivalent to that $\begin{bmatrix}sI-A & B \end{bmatrix}$ has full row rank for all $s \in \mathbb{C}_+$, by the PBH test. Thus, $\begin{bmatrix}j\omega I-A & B \end{bmatrix}$ has full row rank for all $\omega \in  \mathbb{R}$ and $(j\omega I-A)(j\omega I-A)^*+BB^*$ is invertible.} The following theorem explicitly suggests a feedback matrix $K$ and the criteria for it to solve \eqref{problemhinf}.

\begin{theo}\label{prop:criteria}
Consider $\omega_0 \in \mathbb{R}$ such that 
$$K \coloneqq -N(j\omega_0)^*M(j\omega_0)\left(M(j\omega_0)^*M(j\omega_0)\right)^{-1}
$$
is real-valued. Then, $K$ solves  \eqref{problemhinf} if $(M-NK)^{-1}\in RH_{\infty}$ and 
$$\omega_0 \in \arg \max_{\omega \in \mathbb{R}} \, \left\|\begin{bmatrix}I\\K \end{bmatrix}(M(j\omega)-N(j\omega)K)^{-1}\right\|.$$
Moreover, if $K$ is a feasible solution, the optimal value is $$\left\|\left(M(j\omega_0)M(j\omega_0)^*+N(j\omega_0)N(j\omega_0)^*\right)^{-1}\right\|^{\frac{1}{2}}.$$
\end{theo}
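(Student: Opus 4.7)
The plan is to bracket the optimum of~\eqref{problemhinf} between a pointwise least-squares lower bound, valid for every feasible $K'$, and an explicit upper bound attained by the proposed $K$, with the two bounds meeting thanks to the $\arg\max$ hypothesis at $\omega_0$.

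For the lower bound I would argue frequency-by-frequency. Fix any stabilizing $K'\in RL_\infty$, any $\omega\in\mathbb{R}$, and any $w\in\mathbb{C}^l$, and set $y=(M-NK')^{-1}w$, $u=K'y$, with all matrices evaluated at $j\omega$. These automatically satisfy the pointwise identity $My-Nu=w$. Since $MM^*+NN^*$ is invertible, $[M,-N]$ has full row rank, so the constrained least-squares problem $\min\{\|y'\|^2+\|u'\|^2:My'-Nu'=w\}$ is solved by $[y';u']=[M,-N]^\dagger w$, with minimum value $w^*(MM^*+NN^*)^{-1}w$. Consequently $\|G_{K'}(j\omega)w\|^2=\|y\|^2+\|u\|^2\geq w^*(MM^*+NN^*)^{-1}w$, and supremizing over $w$ and $\omega$ yields
$$\|G_{K'}\|_\infty^2 \;\geq\; \sup_{\omega\in\mathbb{R}}\|(M(j\omega)M(j\omega)^*+N(j\omega)N(j\omega)^*)^{-1}\|.$$

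For the upper bound I would plug $K$ into $G_K(j\omega_0)$. Because $M(j\omega_0)$ is square with full column rank, the formula for $K$ collapses to $K=-N(j\omega_0)^*M(j\omega_0)^{-*}$, and then $M-NK=(MM^*+NN^*)M^{-*}$ at $\omega_0$, so $(M-NK)^{-1}=M^*(MM^*+NN^*)^{-1}$. Substituting gives
$$G_K(j\omega_0)=\begin{bmatrix}M^*\\-N^*\end{bmatrix}(MM^*+NN^*)^{-1},$$
and a one-line computation produces $G_K(j\omega_0)^*G_K(j\omega_0)=(MM^*+NN^*)^{-1}$. Hence $\|G_K(j\omega_0)\|^2=\|(M(j\omega_0)M(j\omega_0)^*+N(j\omega_0)N(j\omega_0)^*)^{-1}\|$.

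To finish, note that $K$ is a real constant matrix in $RH_\infty$ and $(M-NK)^{-1}\in RH_\infty$ by assumption, so $K$ is feasible; combined with the $\arg\max$ hypothesis this gives $\|G_K\|_\infty^2=\|G_K(j\omega_0)\|^2$, and chaining the two bounds produces the sandwich
\begin{align*}
\|G_K\|_\infty^2 &\geq \inf_{K'}\|G_{K'}\|_\infty^2 \geq \sup_\omega\|(M(j\omega)M(j\omega)^*+N(j\omega)N(j\omega)^*)^{-1}\|\\
&\geq \|(M(j\omega_0)M(j\omega_0)^*+N(j\omega_0)N(j\omega_0)^*)^{-1}\|=\|G_K\|_\infty^2,
\end{align*}
forcing equality throughout. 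This simultaneously shows that $K$ solves~\eqref{problemhinf} and that the optimal value of~\eqref{problemhinf} equals the stated square root. The main obstacle I expect is cleanly carrying out the algebraic simplification $(M-NK)^{-1}=M^*(MM^*+NN^*)^{-1}$ in the paper's slightly more general setting (where $M(s)$ and $N(s)$ need not come from a state-space realization~\eqref{sysss}); once the square-invertibility of $M-NK$ is in hand, the rest is a short sandwich built on the worst-case $L_2$ interpretation of $\|\cdot\|_\infty$ and an unconstrained least-squares minimum.
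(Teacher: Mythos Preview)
Your proposal is correct and follows essentially the same two-part strategy as the paper: a least-squares (pseudoinverse) lower bound obtained by dropping the controller constraint $u=K'y$ from the pointwise identity $My-Nu=w$, and an upper bound from directly evaluating $G_K(j\omega_0)$ and invoking the $\arg\max$ hypothesis, with the two meeting in a sandwich. Your pointwise treatment with vectors $w\in\mathbb{C}^l$ is a slightly more elementary packaging than the paper's $L_2/H_2$ signal formulation, and your chain of inequalities handles the passage through $\sup_\omega$ a bit more carefully, but the mathematical content is the same.
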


\begin{remark} \label{rem:controllerform}
Note that if $M$ is square, then $${K = -N(j\omega_0)^*M(j\omega_0)^{-*}}.$$
\end{remark}

The proof of Theorem~\ref{prop:criteria} is given at the end of this section. For now, note that the control law suggested by Theorem~\ref{prop:criteria} is on closed form. Hence, it is in no need of numerical computation to be synthesized, which is generally the case for $H_{\infty}$ optimal control laws. Furthemore, its sparsity pattern is related to the structure of the system. In the case of square and invertible $M(j\omega_0)$, it is directly related to the sparsity of the matrices $M(j\omega_0)^{-1}$ and~$N(j\omega_0)$, see Remark~\ref{rem:controllerform}. 

Theorem~\ref{prop:criteria} is here initially illustrated by means of the following two scalar examples. 
\begin{example} \label{ex:basic1}
Consider \eqref{gensys} with $M(s) = s+1$ and $N(s) = 1$, i.e., it can be written on the form \eqref{sysss} with $A = -1$ and $B = 1$. Given $\omega_0 = 0$, the proposed controller is ${K = -N(0)M(0)^{-1} = -1}$. By Theorem~\ref{prop:criteria}, it solves \eqref{problemhinf} as $$(M-NK)^{-1} = \frac{1}{s+2}$$ is stable and  
\begin{equation*}
\arg \max_{\omega \in \mathbb{R}} \, \left\|\begin{bmatrix}1\\K \end{bmatrix}(M(j\omega)-N(j\omega)K)^{-1}\right\| = 
\arg \max_{\omega \in \mathbb{R}} \; \frac{1}{\omega^2+4}= 0.
\end{equation*}
\end{example}

\begin{example} \label{ex:basic2}
Consider \eqref{gensys} with $M(s) = (s+2)^2$ and ${N(s)=s+1}$. The system can be written as follows in the time domain
$$\ddot{y}+4\dot{y}+4y =\dot{u} + u + w. $$
Given $\omega_0 = 0$, the proposed controller is ${K = -N(0)M(0)^{-1} = -1/4}$. Note that it stabilizes \eqref{gensys} as $$(M-NK)^{-1} = \frac{4}{4s^2+17s+17}.$$ Furthermore, 
\begin{multline*}
\arg \max_{\omega \in \mathbb{R}} \, \left\|\begin{bmatrix}1\\K \end{bmatrix}(M(j\omega)-N(j\omega)K)^{-1}\right\| = \\
\arg \max_{\omega \in \mathbb{R}} \; \frac{1}{(4\omega^2+17)^2+17\omega^2}= 0.
\end{multline*}
Hence, by Theorem~\ref{prop:criteria}, $K= -1/4$ solves \eqref{problemhinf} for this system. 
\end{example}
In the following section we give examples of systems on the form \eqref{gensys} for which the criteria of Theorem~\ref{prop:criteria} are fulfilled. In fact, we show that it is the case for models for temperature dynamics in buildings, water irrigation systems, buffer networks and certain electrical power systems. Naturally, these systems have sparse matrix functions $M$ and $N$ that will be shown to result in a sparse feedback matrix. There, it becomes evident that the suggested control law is suitable for control of large-scale systems.  This section is here ended with the proof of Theorem~\ref{prop:criteria}. 

\begin{proof}[{Proof of Theorem \ref{prop:criteria}}]
The proof is divided into two parts. In the first part, it is shown that no controller can achieve a lower value of \eqref{mainproblem} than 
$$\left\|\left(M(j\omega_0)M(j\omega_0)^*+N(j\omega_0)N(j\omega_0)^*\right)^{-1}\right\|^{\frac{1}{2}}.$$
In the second part, it is shown that if $$K= -N(j\omega_0)^*M(j\omega_0)\left(M(j\omega_0)^*M(j\omega_0)\right)^{-1}$$
is real-valued such that $(M-NK)^{-1}\in RH_{\infty}$ and 
$$ \arg \max_{\omega \in \mathbb{R}} \, \left\|\begin{bmatrix}I\\K \end{bmatrix}(M(j\omega)-N(j\omega)K)^{-1}\right\| = \omega_0,$$
it meets this lower bound. 

Denote the optimal gain by $\gamma_{\text{opt}}$, i.e., 
$$\gamma_{\text{opt}} \coloneqq \inf_{K\in RL_{\infty}} \; \left\|\begin{bmatrix}I\\K \end{bmatrix}(M-NK)^{-1} \right\|_{\infty}.$$
If there does not exists a stabilizing controller $\hat{u} = K\hat{y}$, ${K\in RL_{\infty}}$, such that $(M-NK)^{-1}\in RH_{\infty}$, then $\gamma_{\text{opt}} = \infty$. In this case, the lower bound holds trivially. The remaining part of the proof considers the case when there exists a $K$ such that $(M-NK)^{-1}\in RH_{\infty}$. The optimal gain $\gamma_{\text{opt}}$ can be written as
\begin{equation*} 
\inf_{K\in RL_{\infty}} \; \sup_{\omega \in \mathbb{R}} \;\left\|\begin{bmatrix}I\\K(j\omega) \end{bmatrix}(M(j\omega)-N(j\omega)K(j\omega))^{-1} \right\|
\end{equation*}
by the definition of the $H_{\infty}$ norm. Given $\omega \in \mathbb{R}$, define 
\begin{equation} \label{defz}
\hat{z} = \begin{bmatrix}I\\K(j\omega) \end{bmatrix}(M(j\omega)-N(j\omega)K(j\omega))^{-1} \hat{w}
\end{equation}
for $\hat{w} \in RH_2$, $\|\hat{w}\| \leq 1$. Then, 
\begin{equation} \label{frequencynorm}
\left\|\begin{bmatrix}I\\K(j\omega) \end{bmatrix}(M(j\omega)-N(j\omega)K(j\omega))^{-1} \right\| = \begin{cases}\sup_{\|\hat{w}\| \leq 1} \; \|\hat{z}\|,\\ \text{subject to \eqref{defz}}.\end{cases}
\end{equation}
The optimization problem \eqref{frequencynorm} can equivalently be written as
\begin{align}
\sup_{\|\hat{w}\| \leq 1} &\quad\|\hat{z}\|\\ \text{subject to} & \quad \begin{bmatrix} M(j\omega) & -N(j\omega)\end{bmatrix} \hat{z} = \hat{w} \label{systemconstraint}\\ 
&\quad \hat{z} = \begin{bmatrix}\hat{y}\\\hat{u} \end{bmatrix}\label{defiz}\\
& \quad \hat{u}=K(j\omega)\hat{y}\label{controllerconstraint},
\end{align}
where intermittent signals $\hat{u}$ and $\hat{y}$ have been introduced. 
Now, if we drop the controller constraint \eqref{controllerconstraint}, and implicitly \eqref{defiz}, $\gamma_\text{opt}$ can be lower bounded as follows
$$
\gamma_{\text{opt}} = \inf_{K\in RL_{\infty}} \; \sup_{\omega \in \mathbb{R}} \;\sup_{\|\hat{w}\| \leq 1 \atop \eqref{systemconstraint}, \, \eqref{defiz},\,  \eqref{controllerconstraint}} \|\hat{z}\| \geq \sup_{\omega \in \mathbb{R}} \;\sup_{\|\hat{w}\|_{H_2} \leq 1 \atop \eqref{systemconstraint}} \|\hat{z}\|_{L_2}$$
where the latter problem is equivalent to
$$\left \| \begin{bmatrix} M & -N\end{bmatrix}^{\dagger}\right\|_{L_{\infty}} =\sup_{\omega \in \mathbb{R}} \; \| \begin{bmatrix} M(j\omega) & -N(j\omega)\end{bmatrix}^{\dagger}\|.
$$
Given the invertibility assumption on the matrix $$M(j\omega_0)M(j\omega_0)^*+N(j\omega_0)N(j\omega_0)^*,$$ we can write
\begin{multline*}
\begin{bmatrix} M(j\omega) & -N(j\omega)\end{bmatrix}^{\dagger} \\= \begin{bmatrix} M(j\omega_0)^* \\ -N(j\omega_0)^*\end{bmatrix}\left(M(j\omega_0)M(j\omega_0)^*+N(j\omega_0)N(j\omega_0)^*\right)^{-1},
\end{multline*}
and it follows that 
\begin{equation*}
\left \| \begin{bmatrix} M & -N\end{bmatrix}^{\dagger} \right\|_{L_{\infty}} = \left\|\left(M(j\omega_0)M(j\omega_0)^*+N(j\omega_0)N(j\omega_0)^*\right)^{-1}\right\|^{\frac{1}{2}}.
\end{equation*}

Consider \eqref{gensys} and $\omega_0 \in \mathbb{R}$ such that 
$$K = -N(j\omega_0)^*M(j\omega_0)\left(M(j\omega_0)^*M(j\omega_0)\right)^{-1}
$$
is real-valued and $(M-NK)^{-1} \in RH_{\infty}$. Then, we can compute
\begin{equation*}
\left\|\begin{bmatrix}I\\K \end{bmatrix}(M-NK)^{-1} \right\|_{\infty} = \sup_{\omega \in \mathbb{R}} \, \left\|\begin{bmatrix}I\\K \end{bmatrix}(M(j\omega)-N(j\omega)K)^{-1}\right\|. \end{equation*}
Further, from the assumption
$$\omega_0 \in \arg \max_{\omega \in \mathbb{R}} \, \left\|\begin{bmatrix}I\\K \end{bmatrix}(M(j\omega)-N(j\omega)K)^{-1}\right\|$$
we arrive at
\begin{equation*}
\left\|\begin{bmatrix}I\\K \end{bmatrix}(M-NK)^{-1} \right\|_{\infty}= \left\|\begin{bmatrix}I\\K \end{bmatrix}(M(j\omega_0)-N(j\omega_0)K)^{-1}\right\|.
\end{equation*}
Inserting the expression for $K$ yields 
\begin{multline*}
\left\|\begin{bmatrix}I\\K \end{bmatrix}(M(j\omega_0)-N(j\omega_0)K)^{-1}\right\|=\\ \left\|\begin{bmatrix}M(j\omega_0)^*\\-N(j\omega_0)^* \end{bmatrix}\left(M(j\omega_0)M(j\omega_0)^*+N(j\omega_0)N(j\omega_0)^*\right)^{-1}\right\| \\ 
\left\|\left(M(j\omega_0)M(j\omega_0)^*+N(j\omega_0)N(j\omega_0)^*\right)^{-1}\right\|^{\frac{1}{2}},
\end{multline*}
which is exactly the lower bound. 
\end{proof}

\section{Bottleneck frequencies} \label{sec:bottleneck}

Theorem~\ref{prop:criteria} is suggestive of the following recipe for synthesis of $H_{\infty}$ optimal static controllers; \textit{Given $\omega_0 \in \mathbb{R}$, check that the proposed $K$ 
\begin{enumerate}
\item stabilizes \eqref{gensys} and that $(M-NK)^{-1}$ is a proper and real rational transfer matrix,
\item achieves the maximum of $$\, \left\|\begin{bmatrix}I\\K \end{bmatrix}(M(j\omega)-N(j\omega)K)^{-1}\right\|$$
at $\omega_0$.
\end{enumerate}}

This section treats important cases of when this recipe follows through. That is, systems and frequencies $\omega_0$ for which the criteria of Theorem~\ref{prop:criteria} are satisfied. Furthemore, we comment on how the optimal controller can be used in control of large-scale systems.

\subsection{Important cases when $\omega_0 = 0$}\label{sec:freqzero}
Consider \eqref{gensys} with $M(s) = Es-A$, where $E,\, A \in \mathbb{R}^{k\times k}$, $A^{-1}$ exists, and $N(s)=B$, where $B \in \mathbb{R}^{k\times m}$. Then, the following Corollary follows from Theorem~\ref{prop:criteria}.
\begin{coro} \label{coro:general}
Consider \eqref{gensys} with $M(s) = Es-A$, $A^{-1}$ exists, where $E,\, A \in \mathbb{R}^{k\times k}$, and $N(s)=B$, where $B \in \mathbb{R}^{m\times k}$. Define
$$K =B^TA^{-T}.$$
Then, \eqref{problemhinf} is solved by $K$
if the minimal realization of ${(E,A+BBA^{-T})}$ is Hurwitz and 
 $$\omega^2FG^{-1}F^T+j\omega(F^T-F)+G \succeq \|G^{-1}\|^{-1} I,$$
holds for all $\omega \in \mathbb{R}$, where $F = EA^T$ and ${G = AA^T+BB^T}$. 
\end{coro}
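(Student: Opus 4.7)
The plan is to apply Theorem~\ref{prop:criteria} at the candidate frequency $\omega_0 = 0$. First I will verify that the proposed $K = B^T A^{-T}$ coincides with the feedback matrix prescribed by the theorem at $\omega_0 = 0$: substituting $M(0) = -A$ and $N(0) = B$ into the formula and using invertibility of $A$ yields
\[
-N(0)^* M(0)\bigl(M(0)^* M(0)\bigr)^{-1} = B^T A (A^T A)^{-1} = B^T A^{-T},
\]
which is manifestly real. The first hypothesis of Theorem~\ref{prop:criteria}, namely $(M-NK)^{-1}\in RH_\infty$, is then exactly the assumption that the minimal realization of the pencil $Es - (A + BB^T A^{-T})$ is Hurwitz.

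The substantive step will be to verify the second hypothesis of Theorem~\ref{prop:criteria}, i.e.\ that $\omega_0 = 0$ maximizes $\omega \mapsto \|[I;K](M(j\omega) - N(j\omega)K)^{-1}\|$. Abbreviating $\tilde{A} := A + BB^T A^{-T}$ and $H(s) := Es - \tilde{A}$, and using the identity $\|[I;K] X\|^2 = \|X^*(I+K^T K) X\|$, the required bound is
\[
\bigl\|H(j\omega)^{-*}(I+K^T K) H(j\omega)^{-1}\bigr\| \leq \|G^{-1}\|, \quad \omega \in \mathbb{R}.
\]
Since the matrix inside the norm is Hermitian positive definite, one may invert and equivalently demand
\[
H(j\omega)\,(I+K^T K)^{-1}\,H(j\omega)^* \;\succeq\; \|G^{-1}\|^{-1} I, \quad \omega \in \mathbb{R}.
\]

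The heart of the argument will be an algebraic simplification of this left-hand side. The key identity I will exploit is $\tilde{A} A^T = AA^T + BB^T = G$, and hence $A\tilde{A}^T = G$ as well. Combined with $I + K^T K = A^{-1}(AA^T + BB^T)A^{-T} = A^{-1} G A^{-T}$, whence $(I+K^T K)^{-1} = A^T G^{-1} A$, expanding
\[
(j\omega E - \tilde{A})\,A^T G^{-1} A\,(-j\omega E^T - \tilde{A}^T)
\]
and using $F = EA^T$ collapses the constant term to $\tilde{A} A^T G^{-1} A \tilde{A}^T = G \cdot G^{-1} \cdot G = G$, the $j\omega$ cross-terms to $j\omega(F^T - F)$ via $\tilde{A} A^T G^{-1} F^T = F^T$ and its conjugate, and produces the clean $\omega^2$ term $F G^{-1} F^T$. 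Thus the left-hand side equals $\omega^2 F G^{-1} F^T + j\omega(F^T - F) + G$, and the Corollary's standing hypothesis is precisely the inequality required. Theorem~\ref{prop:criteria} then concludes the argument.

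The main obstacle I anticipate is spotting and exploiting the collapsing identity $\tilde{A} A^T = G$; without it, the expansion of $H(j\omega)(I+K^T K)^{-1} H(j\omega)^*$ looks like a mess of terms involving $E$, $A$, $B$, and $A^{-T}$. Once the identity is in hand, every cross-term reduces neatly and the match with the hypothesis is immediate.
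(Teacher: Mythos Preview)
Your proposal is correct and follows essentially the same route as the paper's proof: apply Theorem~\ref{prop:criteria} at $\omega_0=0$, reduce the argmax condition to the matrix inequality by squaring and inverting, and expand to obtain $\omega^2 FG^{-1}F^T + j\omega(F^T-F)+G$. The only cosmetic difference is that the paper factors $\begin{bmatrix}I\\K\end{bmatrix}=\begin{bmatrix}A^T\\B^T\end{bmatrix}A^{-T}$ up front and absorbs $A^{-T}$ into the resolvent to get $(Fj\omega-G)^{-1}$ directly, whereas you keep $H(j\omega)=Ej\omega-\tilde A$ and instead compute $(I+K^TK)^{-1}=A^TG^{-1}A$; your identity $\tilde A A^T=G$ is precisely what makes these two bookkeeping choices collapse to the same expression $(j\omega F-G)G^{-1}(-j\omega F^T-G)$.
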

\begin{proof}
Given $M(s) = Es-A$, where $E,\, A \in \mathbb{R}^{k\times k}$, and $N(s)=B$, where $B \in \mathbb{R}^{m\times k}$ we have that $$KM(0)^*=-N(0)^*$$ which yields $K = B^TA^{-T}$. Thus, by Theorem~\ref{prop:criteria}, $K$ solves \eqref{problemhinf} if $(Es-A-BB^TA^{-T})^{-1}$ is stable and  
\begin{equation} \label{seccrit}
0 \in \arg \max_{\omega \in \mathbb{R}} \, \left\|\begin{bmatrix}A^T\\B^T \end{bmatrix}(EA^Tj\omega-AA^T-BB^T)^{-1}\right\|.
\end{equation}
Note that $(Es-A-BB^TA^{-T})^{-1}$ is stable if and only if the minimal realization of  ${(E,A+BB^TA^{-T})}$ is Hurwitz. Further, \eqref{seccrit} is equivalent to that
\begin{equation*}
\left\|\begin{bmatrix}A^T\\B^T \end{bmatrix}(EA^Tj\omega-AA^T-BB^T)^{-1}\right\| \leq \left\|(AA^T+BB^T)^{-1}\right\|^{\frac{1}{2}},
\end{equation*}
holds for all $\omega \in \mathbb{R}$. This criterion can be written as 
\begin{multline*}
\left(\begin{bmatrix}A^T\\B^T \end{bmatrix}(EA^Tj\omega-AA^T-BB^T)^{-1} \right)^* \begin{bmatrix}A^T\\B^T \end{bmatrix}(EA^Tj\omega-AA^T-BB^T)^{-1} \\ \preceq \left\|(AA^T+BB^T)^{-1}\right\|.
\end{multline*}
If we denote $F = EA^T$ and $G = AA^T+BB^T$, it can be equivalently written as 
\begin{equation*}
\left((Fj\omega-G)G^{-1}(-F^Tj\omega-G) \right)^{-1} \preceq \|G^{-1}\|
\end{equation*}
and further to 
\begin{multline*}
\|G^{-1}\|^{-1} \preceq (Fj\omega-G)G^{-1}(-F^Tj\omega-G) = \omega^2FG^{-1}F^T+j\omega(F^T-F)+G
\end{multline*}
which is exactly the inequality given in the statement, and the proof is done.
\end{proof}
The criteria given in Corollary~\ref{coro:general} can be verified by the following numerical tests: 
\begin{enumerate}
\item$(E,A+BB^TA^{-T})$ is Hurwitz if and only if there exists a $P \succ 0$ such that 
\begin{equation*}
(A+BB^TA^{-T})^TPE\\ +E^TP(A+BB^TA^{-T}) \prec 0.
\end{equation*}
\item Compute 
\begin{equation*}\left\|\begin{bmatrix}I\\B^TA^{-T} \end{bmatrix}\left(Es-A-BB^TA^{-T}\right)^{-1}\right\|_{\infty}.
\end{equation*}
If it is equal to $\left\|(AA^T+BB^T)^{-1}\right\|^{\frac{1}{2}}$
then 
 $$\omega^2FG^{-1}F^T+j\omega(F^T-F)+G \succeq \|G^{-1}\|^{-1} I,$$
holds for all $\omega \in \mathbb{R}$. 
\end{enumerate}

The numerical tests described above are as expensive to perform as general $H_{\infty}$-synthesis and are without any guarantee of an applicable controller. However, if the tests follow through, this approach gives an applicable optimal controller on closed-form. This is illustrated in the following example. 

\begin{example}[Asymmetric system] \label{ex:vehicle}
Consider 
\begin{equation} \label{toyformation}
\begin{aligned}
\begin{bmatrix}\dot{x}_1\\\dot{x}_2 \end{bmatrix} &= \underbrace{\begin{bmatrix}-a & a \\ 0 & -1 \end{bmatrix}}_{=A}\begin{bmatrix}x_1 \\ x_2 \end{bmatrix}+\underbrace{\begin{bmatrix}1 \\ 0 \end{bmatrix}}_{=B}u + w.
\end{aligned}
\end{equation}
We will now show that the control law suggested by Corollary~\ref{coro:general} is optimal for this system when $a>0$. First, we need to show that $A+BB^TA^{-T}$ is Hurwitz. It is clearly the case as the eigenvalues of 
$$A+BB^TA^{-T} = \begin{bmatrix}-a-1/a &a\\ 0 &-1 \end{bmatrix}$$
have negative real-part for $a>0$. Secondly, we need to show that the matrix inequality in Corollary~\ref{coro:general} holds for all $\omega \in \mathbb{R}$. In this specific case it is equivalent to that 
\begin{align}
\begin{bmatrix}\omega^2\frac{a^2}{a^2+1}+2a^2+1-g^{-1} & -a(1-j\omega)\\ -a(1+j\omega) & \omega^2+1-g^{-1} \end{bmatrix} & \succeq 0, \label{thineq2}
\end{align}
holds for all $\omega\in\mathbb{R}$ where 
\begin{equation*}
g \coloneqq \left\|\begin{bmatrix}I\\B^TA^{-T} \end{bmatrix}\left(A+BB^TA^{-T}\right)^{-1}\right\| \\= \frac{1}{a^2+1}\left \|\begin{bmatrix}1 & a\\ a & 2a^2+1 \end{bmatrix} \right \|. 
\end{equation*}
Note that, by the definition of $g$, we know that 
\begin{align*}
\begin{bmatrix}2a^2+1-g^{-1} & -a\\ -a &1-g^{-1} \end{bmatrix} & \succeq 0,
\end{align*}
which equivalent to $2a^2+1-g^{-1} \geq0$, $1-g^{-1}\geq0$ and $(1-g^{-1})(2a^2+1-g^{-1})-a^2\geq0$ by the Schur complement lemma. Equation \eqref{thineq2} holds if and only if 
\begin{equation}
\begin{aligned}
\omega^2+1-g^{-1}\geq 0,\\ 
\omega^2\frac{a^2}{a^2+1}+2a^2+1-g^{-1}\geq0,\\
\omega^4\frac{a^2}{a^2+1}+ \omega^2\left(3a^2+(1-g^{-1})\left(1+\frac{a^2}{a^2+1}\right)\right)\\+(2a^2+1-g^{-1})(1-g^{-1})-a^2\geq 0,\end{aligned}
\end{equation}
for all $\omega \in \mathbb{R}$, again by the Schur complement lemma. It is easy to see that the first two inequalities hold by the definition of $g$ and the terms $\omega^2$ and $a^2/(a^2+1)$ being non-negative. The coefficients in the latter inequality are all non-negative, again by the definition of $g$, while $\omega$ appears as $\omega^2$ and $\omega^4$. Thus, it holds for all $\omega$ and the proof is done. The optimal control signal is given by $u = B^TA^{-T}x,$ i.e., $u = -x_1/a$.
\end{example}

The structural properties of the control law are suitable for control of large-scale systems. This is showcased by the following example.

\begin{example}[Water irrigation]  \label{ex:water}
Consider the following model of the pool dynamics in a water irrigation network
\begin{equation} \label{trans}
\begin{aligned} 
\dot{q}_i &= \frac{1}{\alpha_i} \left[r_i -u_{i-1}-\beta_i q_i -w_i \right],\\ 
\dot{r}_i &= \frac{1}{\tau_i}\left[-r_i +u_i\right],
\end{aligned}
\end{equation}
where $i = 1,\dots, N$, and $N$ is the total number of pools. Moreover, $q_i$ is the water level in pool $i$, around an operating point while $u_i$ is the inflow to pool $i$, and $u_{i-1}$ is the outflow of pool $i$.  Note that $u_0 = 0$, i.e., there is no outflow from pool $1$. The signal $w_i$ is an unknown disturbance or uncertainty in the load profile. Parameters $\alpha_i$, $\beta_i$ and $\tau_i$ are all positive. This model is an approximated version\footnote{ Time-delays are present in the model stated in \cite{mareels2005systems}. Furthermore, we will disregard the upper limit constraint on the control input considered in  \cite{mareels2005systems}.}  of that stated in \cite{mareels2005systems}. The pools are in-line, see \figref{water}. 

\begin{figure}
\begin{center}
\includegraphics{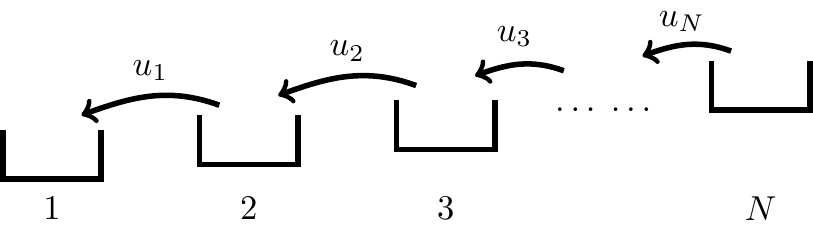}
\caption{Water irrigation network in Example~\ref{ex:water}.}
\label{water}
\end{center}
\end{figure}

The overall system of $N$ pools can be written as 
\begin{align*}
\dot{x} = Ax+Bu+Hw,
\end{align*}
where $x = [q_1,r_1,q_2,r_2,\dots, q_N,r_N]^T$, $u = [u_1, \dots, u_N]^T$ and $w = [w_1,\dots,w_N]^T$. Furthermore, $A$ is block-diagonal with $N$ number of $2\times 2$ elements, with element $i$ given by
\begin{align*}
(A)_{ii} = \begin{bmatrix}
-\beta_i/\alpha_i &1/\alpha_i \\ 0& -1/\tau_i
\end{bmatrix}.
\end{align*}
Moreover, column $i$ of the control input matrix $B$ is given by 
\begin{align*}
(B)_{\cdot i} = \begin{cases} 
\begin{bmatrix}0^{N-1} & \frac{1}{\tau_N} \end{bmatrix}^T & \textrm{if $i = N$}, \\ 
\begin{bmatrix} 0^{2i-1} & \frac{1}{\tau_i} &-\frac{1}{\alpha_{i+1}} &0^{2(N-i)-3}\end{bmatrix} ^T& \textrm{ otherwise},
\end{cases}
\end{align*}
while $H$ is of dimension $2N\times N$ with $(H)_{ii} = 1$ if $i$ is odd, otherwise zero. Moreover, the type of system described can be used as a model for transportation networks, where a commodity is transported from one place in the network to another. 

Consider the problem of designing each control signal $u_{i}$ based only on local measurements, i.e., $q_i$, $r_i$, $q_{i+1}$, such that the impact that the disturbances are optimally attenuated with minimum control effort. Corollary~\ref{coro:general} suggests one such control law as
\begin{align*}
u_{i} = \begin{cases} -q_N/\beta_N-r_N& \text{if } i = N,\\ -q_i/\beta_i-r_i+q_{i+1}/\beta_{i+1}& \text{otherwise},\end{cases}
\end{align*}
that is optimal for a wide range of system parameters $\alpha_i$, $\beta_i$ and~$\tau_i$. In terms of the water irrigation system, this control law will keep the water levels of each pool around their operating point with minimum control effort. 
\end{example}

In the following set of examples, the criteria in Corollary~\ref{coro:general} can easily be shown to be fulfilled. Thus, no tests are needed and the controller can be readily implemented. Consider \eqref{gensys} with $M(s) = Es-A$, where $E,\, A \in \mathbb{R}^{n\times n}$, and $N(s)=B$, where $B \in \mathbb{R}^{m\times n}$. The following corollary declares the properties of the system's matrices under which $K=B^TA^{-T}$ is optimal. The main properties are symmetry of $A$ and $E$.

\begin{coro} \label{coro:sym}
Consider \eqref{gensys} with $M(s) = Es-A$, where $E,\, A \in \mathbb{R}^{n\times n}$ and $N(s)=B$, where $B \in \mathbb{R}^{m\times n}$. Moreover, $E=E^T \succ 0$, $A = A^T \prec 0$ and $EA=AE$. Then, ${K = B^TA^{-1}}$ solves \eqref{problemhinf}.
\end{coro}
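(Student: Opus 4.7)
The plan is to deduce the statement as an immediate specialization of Corollary~\ref{coro:general}. Since $A = A^T$ we have $A^{-T} = A^{-1}$, so the proposed $K = B^T A^{-1}$ coincides with the controller $B^T A^{-T}$ produced by Corollary~\ref{coro:general}. It therefore suffices to verify the two hypotheses of that corollary: Hurwitz stability of the pencil $(E, A + BB^T A^{-1})$, and the frequency inequality $\omega^2 F G^{-1} F^T + j\omega(F^T - F) + G \succeq \|G^{-1}\|^{-1} I$ for all $\omega \in \mathbb{R}$, where $F = EA^T$ and $G = AA^T + BB^T$.

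I would dispatch the matrix inequality first, because the commutativity hypothesis makes it almost automatic. Under $E = E^T$, $A = A^T$, and $EA = AE$, one has $F = EA = AE = A^T E^T = F^T$, so $F$ is symmetric and the skew-Hermitian piece $j\omega(F^T - F)$ vanishes identically. Next, $G = A^2 + BB^T$ is the sum of a positive definite matrix (because $A$ is invertible) and a positive semidefinite one, so $G \succ 0$ and $G \succeq \lambda_{\min}(G)\, I = \|G^{-1}\|^{-1} I$. Finally, since $F$ is symmetric and $G^{-1} \succ 0$, the remaining term $\omega^2 F G^{-1} F = \omega^2 (G^{-1/2} F)^T (G^{-1/2} F)$ is positive semidefinite, and summing yields the required bound for every $\omega$.

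The Hurwitz condition is the step that actually uses the structure, and is where I would spend the effort. My plan is a direct eigenvalue argument: suppose $\lambda E x = (A + BB^T A^{-1}) x$ for some nonzero $x$, and premultiply by $x^* A^{-1}$ to obtain
\[
\lambda\, x^* A^{-1} E x \;=\; \|x\|^2 + \|B^T A^{-1} x\|^2.
\]
The right-hand side is real and strictly positive. The scalar $x^* A^{-1} E x$ is real as well, because $A^{-1}$ and $E$ are both symmetric and commute, so their product is symmetric; moreover, in their common eigenbasis the eigenvalues of $A^{-1} E$ are products of negative eigenvalues of $A^{-1}$ with positive eigenvalues of $E$, giving $A^{-1} E \prec 0$. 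Hence $\lambda$ is real and strictly negative, every finite generalized eigenvalue of the pencil lies in the open left half-plane, and the minimal realization is Hurwitz. The main conceptual step, and the only place the hypotheses are really used, is the observation that commutativity simultaneously symmetrizes $F$ and fixes the sign of $A^{-1}E$; once that is noted, both criteria of Corollary~\ref{coro:general} collapse to elementary linear algebra.
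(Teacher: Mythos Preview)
Your proposal is correct, and the treatment of the frequency inequality is essentially identical to the paper's: both observe that $F = EA$ is symmetric under the commutativity hypothesis, so the skew term $j\omega(F^T-F)$ drops out, after which $G \succeq \|G^{-1}\|^{-1} I$ and $\omega^2 F G^{-1} F^T \succeq 0$ finish the job.

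The one methodological difference is in the Hurwitz step. The paper produces an explicit Lyapunov certificate: with $P = -A^{-1}E^{-1} \succ 0$ (positive definite by the same simultaneous-diagonalization reasoning you invoke for $A^{-1}E$), one computes
\[
(A+BB^TA^{-1})^T P E + E^T P (A+BB^TA^{-1}) = -2I - 2A^{-1}BB^TA^{-1} \prec 0,
\]
which is a one-line verification once $P$ is guessed. Your route instead tests generalized eigenvalues directly, premultiplying $\lambda E x = (A+BB^TA^{-1})x$ by $x^*A^{-1}$ and reading off the sign of $\lambda$ from the definiteness of $A^{-1}E$. Both arguments rest on the same structural fact (that $A^{-1}$ and $E$ commute and have opposite signs), and both are short; the Lyapunov version has the minor advantage of fitting the literal hypothesis template of Corollary~\ref{coro:general} and of exhibiting a reusable certificate, while your eigenvalue argument avoids having to guess $P$ and is arguably more transparent about why the spectrum is real.
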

\begin{proof}
Consider $P = -A^{-1}E^{-1}= -E^{-1}A^{-1} \succ 0$. Then, 
\begin{multline*}
(A+BB^TA^{-1})^TPE+EP(A+BB^TA^{-1}) = -2I-2A^{-1}BB^TA^{-1} \prec 0
\end{multline*}
and thus $(E,A+BB^TA^{-1})$ is Hurwitz. Further, by Corollary \ref{coro:general}, $K = B^TA^{-1}$ solves \eqref{problemhinf} if  
 $$\omega^2FG^{-1}F^T+j\omega(F^T-F)+G \succeq \|G^{-1}\|^{-1} I,$$
where $G = A^2+BB^T$ and $F = EA$, holds for all $\omega \in \mathbb{R}$. Note that this inequality is equivalent to 
$$\omega^2FG^{-1}F^T+G \succeq \|G^{-1}\|^{-1} I,$$
due to that $F^T = F$. It is easily seen that the latter inequality holds for all $\omega \in \mathbb{R}$. 
\end{proof}

\begin{example}[Temperature dynamics of a building]
When $E \succ 0$ and $A \prec 0$, the system \eqref{gensys} with $M(s) = Es-A$ covers the dynamics of temperature in a building. The average temperature in a room $i$, denoted $T_i$, as governed by Fourier's law of thermal conduction, is given by
\begin{equation} \label{ex:TempControl}
\begin{aligned}
m_ic\dot{T}_i = \,& p_i(T_{out}-T_i)+\sum_{j\in \mathcal{N}_i}p_{ij}\left(T_j-T_i\right)+u_i+d_i,
\end{aligned} 
\end{equation}
where $m_i$ is the air mass of room $i$ and $c$ is the specific heat capacity of air. Furthermore, $\mathcal{N}_i$ is the set of rooms that share a wall/floor/ceiling with room $i$. The heat conduction  coefficients  $p_i$ and $p_{ij}$ are constant, real-valued and strictly positive while $T_{out}$ is the outdoor temperature. Inputs $d_i$ and $u_i$ are disturbance and control inputs, respectively. Disturbances occur, e.g., when a  window is opened. It is assumed that the average temperatures of each room can be measured as well as controlled, the latter through heating and cooling devices modelled by the control inputs~$u_i$.  

The overall system can be written as $E\dot{x}= Ax+u+w$
where $(x)_i = T_i$, $(u)_i = u_i$, $E$ is a diagonal matrix with positive elements $(E)_{ii} = cm_{i}$, $A \prec 0$ and $(w)_i \coloneqq d_i+p_iT_{out}$. Hence, it is on the form \eqref{gensys} with $M(s)=Es-A$, $A\prec 0$ and $E \succ 0$, and $N(s) = I$.

Consider the problem of designing the control input vector $u$ given the entire state $x$ such that the impact, as measured by the $H_{\infty}$ norm, from disturbance $w$ on $x$ and $u$.  The problem described is a $H_{\infty}$ optimal state feedback problem without any structural requirement on the controller to be designed. However, Corollary \ref{coro:sym} gives a solution to the problem as 
\begin{align*} 
 u = B^T\nu \text{ where } A\nu = x,
\end{align*} 
which can easily be implemented even when the system is of large order. 
\end{example}

The following example treats buffer systems where $E = I$ and $A$ is diagonal. Hence, the inverse of $A$, as it appears in $K = B^TA^{-1}$, is easily computed. 
\begin{example}[Distributed control of buffer systems] \label{ex:buffer}
Consider a system comprised of three buffers with diffusive dynamics
\begin{equation} \label{buffer}
\begin{aligned}
\dot{x}_1 &= -x_1 + u_{12}-u_{21}+w_1\\ 
\dot{x}_2 &= -2x_2 +u_{21}-u_{12} +u_{23}-u_{32}+w_2\\ 
\dot{x}_3 &=-3x_3 +u_{32}-u_{23}+w_3
\end{aligned} 
\end{equation}
where each state $x_i \in \mathbb{R}$, the control inputs $u_{ij} \in \mathbb{R}$ and the disturbances $w_i \in \mathbb{R}$. The overall system can be depicted by a line graph with three nodes representing the subsystems and links between the nodes that share control inputs.

Define vectors $(y)_i = x_i$, $(u)_k = u_{ij}$ and $(w)_i = w_i$.  Consider the problem of designing each control signal $u_{ij}$ based only on measurements of the states $x_i$ and $x_j$ such that the impact on the cost $\|y\|^2+\|u\|^2$ from the disturbance $w$, for $\|w\|\leq 1$, is minimized. That is, to construct each control signal of only local measurements such that the disturbances are optimally attenuated with minimum control effort. Corollary \ref{coro:sym} gives one such control law as 
\begin{align*}
&u_{12} = x_1-\frac{x_2}{2}, &u_{21} = -u_{12} \\
&u_{23} = \frac{x_2}{2}-\frac{x_3}{3}, &u_{32} = -u_{23}.
\end{align*}
\end{example}

The result for a general network of $N$ diffusive components, as the example treated in the introduction, is interesting in its own right and therefore included in the following proposition.
\begin{prop} \label{prop:buffer}
Consider a system comprised of $N$ subsystems
\begin{equation} \label{buffer}
\begin{aligned}
\dot{x}_i &= -a_ix_i + \sum_{(i,j) \in {\mathcal{E}}}u_{ij}-u_{ji}+w_i,
\end{aligned} 
\end{equation}
 where $a_i > 0$ for all $i =1,\dots, N$ and $(i,j)$ is in the set $\mathcal{E}$ if and only if subsystems $i$ and $j$ are connected. Define $(w)_i = w_i$, $(y)_i = x_i$ and $(u)_k = u_{ij}$. Then, the control law
\begin{align*}
u_{ij} = -x_i/a_i+x_j/a_j
\end{align*}
solves \eqref{problemhinf}.
\end{prop}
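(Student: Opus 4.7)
The plan is to recognize Proposition~\ref{prop:buffer} as a direct consequence of Corollary~\ref{coro:sym} once the network dynamics are packaged in matrix form. First, I would write the $N$ scalar equations as $\dot{x} = Ax + Bu + w$ with $A = -\mathrm{diag}(a_1, \ldots, a_N)$ and with $B$ the $N \times |\mathcal{E}|$ incidence-type matrix whose column corresponding to the ordered pair $(i,j) \in \mathcal{E}$ has $+1$ in row $i$, $-1$ in row $j$, and zeros elsewhere. Comparing with \eqref{gensys}, this puts the system in the form $M(s) = sI - A$, $N(s) = B$, i.e., $E = I$ in the notation of Corollary~\ref{coro:sym}.

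Second, I would verify the three hypotheses of Corollary~\ref{coro:sym}. The identity $E = I$ is symmetric positive definite and commutes with $A$ trivially, while the assumption $a_i > 0$ for every $i$ makes $A$ symmetric and negative definite. The corollary then immediately asserts that $K = B^T A^{-1}$ is an optimal feedback for \eqref{problemhinf}.

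Third, I would unpack the closed-form $u = Kx$. Since $A^{-1} = -\mathrm{diag}(1/a_1, \ldots, 1/a_N)$, the $i$-th entry of $A^{-1}x$ is $-x_i/a_i$. The row of $B^T$ indexed by $u_{ij}$ carries $+1$ in column $i$ and $-1$ in column $j$, so that entry of $Kx$ evaluates to
\[
(A^{-1}x)_i - (A^{-1}x)_j \;=\; -\frac{x_i}{a_i} + \frac{x_j}{a_j},
\]
which is precisely the formula in the proposition. The only step that requires any care — and the mildest possible "obstacle" — is the bookkeeping around the incidence matrix $B$: one has to keep straight that $(i,j)$ and $(j,i)$ give distinct columns of $B$ with opposite signs, a convention that is consistent with, and indeed forces, the antisymmetry $u_{ji} = -u_{ij}$ already visible in the stated control law. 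Once this is settled, the proof reduces to the matrix identification above and a one-line matrix multiplication.
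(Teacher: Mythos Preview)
Your proposal is correct and follows essentially the same route as the paper: both arguments package the dynamics as $\dot{x}=Ax+Bu+w$ with $A=-\operatorname{diag}(a_1,\dots,a_N)\prec 0$ and an incidence-type $B$, invoke Corollary~\ref{coro:sym} with $E=I$, and then read off $u_{ij}=-x_i/a_i+x_j/a_j$ from $K=B^TA^{-1}$. Your explicit bookkeeping on the columns of $B$ and the resulting antisymmetry $u_{ji}=-u_{ij}$ is, if anything, cleaner than the paper's somewhat terse description of $B$.
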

\begin{proof}
The overall system can be written as $\dot{x} = Ax+Bu+w$ where $A$ is diagonal with $A_{ii} = -a_i$ and $B \in \mathbb{R}^{n \times 2| \mathcal{E}|}$ with
$$B_{ij} =1, B_{ji}=-1 \text{ if }(i,j) \in \mathcal{E}$$. Note that $A\prec 0$ as $a_i<0$ for all $i = 1,\dots,N$. Thus it follows from Corollary~\ref{coro:sym} that ${u = B^TA^{-1}x}$ is optimal, i.e., 
$$u_{ij} = -x_i/a_i+x_j/a_j,$$
and the proof is done.
\end{proof}
The control law given by Proposition~\ref{prop:buffer} comply with the restrictions on information sharing. Also, it distributes the impact of a disturbance between the subsystems, through the links of the network, in relation to the rate of diffusion of the subsystems. The proposed control law is distributed as each control input $u_{ij}$ is only comprised of states $i$ and $j$, i.e., local information. Furthermore, it is scalable, as even if the set $\mathcal{E}$ changes, i.e., a link is added or removed, it will not change the already existing control inputs. 

Similarly to the systems treated in Proposition~\ref{prop:buffer}, consider a system of $N$, now multivariable, subsystems 
\begin{equation*}
\begin{aligned}
\dot{x}_i &= A_ix_i + \sum_{(i,j) \in {\mathcal{E}}}B_{ij}u_{ij}+w_i, i\in (1,\dots,N)
\end{aligned} 
\end{equation*}
where each $A_i\in \mathbb{R}^{n_i\times n_i}$ are symmetric and negative definite while $B_{ij}\in \mathbb{R}^{n_i}$ and $n_i \in \mathbb{N}$ is the order of subsystem $i$. This system fulfil the requirements of Corollary~\ref{coro:sym} and the optimal control law is
\begin{align*}
u_{ij} = B_{ij}^T\left(A_i^{-1}x_i+A_j^{-1}x_j \right).
\end{align*}
Note that each control input $u_{ij}$ is only comprised of states $i$ and $j$, i.e., local information. Again, even if the set $\mathcal{E}$ changes, i.e., a link is added or removed, it will not change the already existing control inputs.

In the previous examples we have treated $M(s)$ of the form $M(s) = Es-A$. For general $M(s)$ and $N(s)$, the criteria for optimality are those stated in Theorem~\ref{prop:criteria}, however, for $\omega_0 = 0$. 

\subsection{The case of $\omega_0\neq 0$} \label{sec:nonzerofrequency}
In this section we will cover an application for which the optimal controller is defined at $\omega_0 \neq 0$. 

\begin{example}
Consider the linearized model of the synchronous machine
$$\frac{1}{\omega_0^2}\ddot{\theta}+\frac{2\zeta}{\omega_0}\dot{\theta}+\theta = P_{\text{dist}}+P_{\text{gen}},$$
where $P_{\text{gen}}$ is the mechanical power-input, $P_{\text{dist}}$ is a disturbance in this input, i.e., a load, while $\theta$ is the rotor phase angle of the machine. Furthermore, $\omega_0, \; \zeta >0$. 

We would like to design a stabilizing droop control ${P_{\text{gen}} = K\dot{\theta}}$ to minimize the cost $$\|P_{\text{gen}}\|^2+\|\dot{\theta}\|^2$$
over the set of disturbances $\|P_{\text{dist}}\|\leq 1$. Define
$$M(s) = \frac{1}{\omega_0^2}s+\frac{2\zeta}{\omega_0}+\frac{1}{s}$$
and note that 
$$M(s)^{-1} = \frac{\omega_0^2 s}{s^2+2\zeta\omega_0s+\omega_0^2}$$
is strictly proper. The dynamics can be written as 
$$M(s)\hat{\theta} = \hat{P}_\text{dist}+\hat{P}_{\text{gen}}.$$
The droop control gain $$K = -M(j\omega_0)^{-1}=-\frac{\omega_0}{2\zeta}$$
is optimal as
\begin{multline*}
(M(s)-K)^{-1} = \frac{\omega_0^2s}{s^2+s(2\zeta\omega_0-K\omega_0^2)+\omega_0^2}
= \frac{\omega_0^2s}{s^2+s(4\zeta^2+\omega_0^2)\omega_0/2\zeta+\omega_0^2}
\end{multline*}
is stable and 
\begin{multline*}
\arg \max_{\omega \in \mathbb{R}} \left |\frac{j\omega_0^2\omega}{-\omega^2+j\omega(4\zeta^2+\omega_0^2)\omega_0/2\zeta+\omega_0^2}\right |^2 \\ 
\arg \max_{\omega \in \mathbb{R}} \frac{\omega_0^2\omega^2}{(\omega_0^2-\omega^2)^2+\omega^2(4\zeta^2+\omega_0^2)^2\omega_0^2/4\zeta^2}
= \omega_0.
\end{multline*}
Now, consider a network of $N$ synchronous machines described by 
$$m\ddot{\theta}+d\dot{\theta}+L\theta = P_{\text{dist}}+P_{\text{gen}},$$
where  $m$ and $d$ are constants while $L$ is the weighted Laplacian of the network's graph, see \cite{newman2010networks}. Denote the eigenvalues of $L$ by $\lambda_i$, $i = 1\dots N$. It is well-known that $\lambda_i \geq 0$. Thus, $L$ is positive definite and can be given the eigendecomposition $P^{-1}D P$, where $P$ is a unitary matrix and $D$ is a diagonal matrix with $D_{ii} = \lambda_i$. Denote $x = P\theta$, $u = PP_{\text gen}$ and $w = PP_{\text dist}$. Then, the system can be written as a set of $N$ scalar subsystems
$$m\ddot{x}_i+d\dot{x}_i+\lambda_ix_i = w_i+u_i.$$
For each subsystem treated separately, $u_i = -\dot{x}_i/d$ is optimal. It follows that, $P_{\text gen} = P^{-1}u = -d^{-1}P^{-1}\dot{x} = -d^{-1}\dot{\theta}$ is optimal for the original system. More generally, we could consider a systems of the form 
$$M\ddot{\theta}+D\dot{\theta}+L\theta = P_{\text{dist}}+P_{\text{gen}},$$
where $M$ and $D$ are now matrices. If there exists a uniform matrix $Q$ that simultaneously diagonalizes $M$, $D$ and $L$, such a system can be treated in a similar manner. 
\end{example}

\section{Scope of the results}\label{sec:scope}

This section comments on the problem setups covered and relates the considered performance objective to classical synthesis objectives in robust control. Moreover, it is shown how the controller suggested in Theorem~\ref{prop:criteria} is derived.  

\subsection{Problem setups covered}
It is possible to consider a more general cost functions than that in \eqref{mainproblem}. However, in favour of clarity, the simplest case was treated in Theorem~\ref{prop:criteria}. The general form is described next. The statement given can be proven in the same manner as Theorem~\ref{prop:criteria}.

Consider \eqref{mainproblem}, however with the objective function $${\|Q\hat{y}\|^2+\|\hat{u}\|^2},$$ where $Q$ is a real-valued matrix with full row-rank and of appropriate dimension. The row rank assumption is natural as otherwise the cost would have abundant terms. It follows from the assumption that $QQ^T$ is invertible. 

Define $Q^{\dagger} = Q^T(QQ^T)^{-1}$. Then, 
\begin{equation*} 
\inf_{K\in RL_{\infty}} \; \left\|\begin{bmatrix}Q\\K \end{bmatrix}(M-NK)^{-1} \right\|_{\infty}  
\end{equation*}
is solved by 
$$K \coloneqq -N(j\omega_0)^*M(j\omega_0)\left(M(j\omega_0)^*M(j\omega_0)\right)^{-1}Q^TQ
$$
if $(M-NK)^{-1}\in RH_{\infty}$ and 
$$\omega_0 \in \arg \max_{\omega \in \mathbb{R}} \, \left\|\begin{bmatrix}Q\\K \end{bmatrix}(M(j\omega)-N(j\omega)K)^{-1}\right\|.$$
Moreover, if $K$ is a feasible solution, the optimal value is $$\left\|\left(M(j\omega_0)Q^\dagger Q^{\dagger *}M(j\omega_0)^*+N(j\omega_0)N(j\omega_0)^*\right)^{-1}\right\|^{\frac{1}{2}}.$$

The set of problems that can be treated through the problem formulation
\begin{equation*} 
\inf_{K\in RL_{\infty}} \; \left\|\begin{bmatrix}Q\\K \end{bmatrix}(M-NK)^{-1} \right\|_{\infty}
\end{equation*}
will now be specified. To begin, denote 
\begin{equation}
z = \begin{bmatrix}Qy\\u \end{bmatrix}\label{z}.
\end{equation} 
Further, consider full row-rank matrices $B_1$, $C_1$ and $C_2$ and define $C_2^{\dagger} = C_2^T(C_2C_2^T)^{-1}$. Then, \eqref{gensys} with $M(s) = (sI-A)^{-1}C_2^{\dagger}$, $N(s) = B_2$, $\hat{w}=B_1\hat{d}$ and \eqref{z} with $Q = C_1C_2^{\dagger}$ can be written in state space as
\begin{align*}
\dot{x} &= Ax+B_1d+B_2u,\\ 
z &= \begin{bmatrix} C_1 \\ 0
\end{bmatrix} x+\begin{bmatrix} 0 \\ I
\end{bmatrix}u,\\ 
y &= C_2x.
\end{align*}
Note that given $B_1 \in \mathbb{R}^{k\times q}$, $B_1\hat{d}$ with $\hat{d}\in H^q_2$ spans the entire $H^k_2$. The objective function is now $\|\hat{z}\|^2$. If $C_2=I$, this is the general full-information $H_{\infty}$ control problem, see \cite[Ch. 4]{stoorvogel1992h}. Its dual problem, that of filtering, can be treated through minimization of the $H_{\infty}$ norm of the transposed closed-loop transfer matrix, i.e., 
$$\left( M^T-K^TN^T\right)^{-1}\begin{bmatrix}I& K^T \end{bmatrix},$$
in the case with $Q = I$. See \cite{lidstromDiscrete} for a treatment of this. In the case with $C_2 \neq I$, note that the formulation is not that of the simplified $H_{\infty}$ output feedback problem as there is no measurement noise, see e.g. \cite[Ch. 14.2]{zhou1996robust}. 

This subsection is ended with a comment on the functions $M(s)$ and $N(s)$ in \eqref{gensys} and solvability of the considered problem. Assume that $M(s)$ is square and invertible. Then, we can write \eqref{gensys} as 
$$\hat{y} = M(s)^{-1}N(s)\hat{u}+M(s)^{-1}\hat{w}.$$
Note that if $M(s)^{-1}$ is not stable, we can never stabilize the system with $\hat{u}$, as the signal $\hat{w}$ is unknown. Furthermore, for the purpose of realizability of the system, $M(s)^{-1}$ and $M(s)^{-1}N(s)$ needs be proper.

\subsection{The performance objective} 
Consider the performance objective \eqref{problemhinf}. It will now be related to classical performance requirements in robust control. To make the comparison clear, consider \eqref{gensys} with $M$ such that $M^{-1}$ exists. Denote ${M^{-1} = P}$ and set $N(s)= -I$. The system can then be written~as 
$\hat{y} = P(s)(-\hat{u}+\hat{w}).$
Moreover, \eqref{problemhinf} becomes
\begin{equation} \label{problemhinf2}
\inf_{K\in RL_{\infty}} \; \left\|\begin{bmatrix}P(I+KP)^{-1} \\KP(I+KP)^{-1} \end{bmatrix} \right\|_{\infty}.
\end{equation}

In general, closed-loop performance is concerned with the behaviour of the systems corresponding to the four transfer functions $(I+PK)^{-1}$, $(I+PK)^{-1}P$, $K(I+PK)^{-1}$ and $(I+PK)^{-1}PK$. See \cite{zhou1996robust} for more details on this statement. The considered performance objective \eqref{problemhinf2} implies properties on two of the transfer functions. However, in the problems we consider, $P$ often has the characteristics of a low-pass filter. Thus, small $$\|P(I+KP)^{-1}\|_{\infty}$$ implies that $\|(I+P(j\omega)K(j\omega))^{-1}\|$ is small at low frequencies. This is generally the performance requirement aimed for.  Also, in this example, as $P$ and $K$ are square with the same dimensions, we have that 
$$\|K(I+P(j\omega)K(j\omega))^{-1}\| \leq  \|K(j\omega)\|\|(I+P(j\omega)K(j\omega))^{-1}\|.$$
Thus, $\|(I+P(j\omega)K(j\omega))^{-1}\|$ small for low frequencies implies that $${\|K(j\omega)(I+P(j\omega)K(j\omega))^{-1}}\|$$ is small for low frequencies, as long as $\|K(j\omega)\|$ is kept small. Note that, as we have shown static controllers to be optimal, the requirement on $K$ implies $\|K\|$ to be small.

\subsection{Construction of optimal controller \\and how to choose $\omega_0$}
In the proof of Theorem~\ref{prop:criteria}, the following inequality is shown to hold
\begin{multline*} 
\inf_{K\in RL_{\infty}} \; \left\|\begin{bmatrix}I\\K \end{bmatrix}(M-NK)^{-1} \right\|_{\infty}  \geq  \sup_{\omega \in \mathbb{R}} \;\sup_{\|\hat{w}\|_{H_2} \leq 1 \atop  [M(j\omega) \, -N(j\omega)] \hat{z} = \hat{w}} \|\hat{z}\|_{L_2}.
\end{multline*}
The problem on the right-hand-side of the inequality can be solved by a least-squares argument. Given a $\hat{w} \in H_2$ and a $\omega \in \mathbb{R}$ consider 
\begin{align*}
\sup &\quad\|\hat{z}\|_{L_2}\\ \text{subject to} & \quad \begin{bmatrix} M(j\omega) & -N(j\omega)\end{bmatrix} \hat{z} = \hat{w}. 
\end{align*}
This is a least-squares problem with unique solution
\begin{multline*}
\hat{z}_{\text{opt}}= \begin{bmatrix} M(j\omega) & -N(j\omega)\end{bmatrix}^{\dagger}\hat{w} \\ = \begin{bmatrix}M(j\omega)^* \\ -N(j\omega)^* \end{bmatrix}\left(M(j\omega)M(j\omega)^*+N(j\omega)N(j\omega)^*\right)^{-1}\hat{w},
\end{multline*}
as $M(j\omega)M(j\omega)^*+N(j\omega)N(j\omega)^*$ is assumed to be invertible for all $\omega \in \mathbb{R}$. 
It follows that 
$$\sup_{\|\hat{w}\|_{H_2}\leq 1 \atop  [M(j\omega) \, -N(j\omega)] \hat{z} = \hat{w}} \|\hat{z}\|_{L_2} = \sup_{\|\hat{w}\|_{H_2}\leq 1} \|\hat{z}_{\text{opt}}\|_{L_2}.$$
Furthermore, consider
$$\hat{w}_0 \in \arg \max_{\|\hat{w}\|_{H_2} \leq 1} \|\hat{z}_{\text{opt}}\|_{L_2},$$
and 
$$\omega_0 \in \arg \max_{\omega\in \mathbb{R}} \left\|[M(j\omega) \, -N(j\omega)]^{\dagger}\hat{w}_0\right\|.$$
Then, the lower bound can be written as 
$$\| \begin{bmatrix} M & -N\end{bmatrix}^{\dagger}\|_{L_{\infty}}  = \| \hat{z}_0\|_{L_2}.$$ 
where $\hat{z}_0$ is defined as follows
\begin{equation} \label{zzero}
\hat{z}_{0} = \begin{bmatrix}M(j\omega_0)^* \\ -N(j\omega_0)^* \end{bmatrix}\left(M(j\omega_0)M(j\omega_0)^*+N(j\omega_0)N(j\omega_0)^*\right)^{-1}\hat{w}_0.
\end{equation}

The lower bound, i.e., $\hat{z}_0$, is suggestive of a control law of the form $\hat{u} = K\hat{y}$. To see this, first define
\begin{equation} 
\hat{x}_0 = \left(M(j\omega_0)M(j\omega_0)^*+N(j\omega_0)N(j\omega_0)^*\right)^{-1}\hat{w}_0.
\end{equation}
Then, $\hat{z}_0$ in \eqref{zzero} can be written as
$$\hat{z}_{0} = \begin{bmatrix}M(j\omega_0)^* \\ -N(j\omega_0)^* \end{bmatrix}\hat{x}_0.$$
Consider the definition of $\hat{z}$ in the variables $\hat{y}$ and $\hat{u}$ as well as the controller description made in \eqref{defiz} and \eqref{controllerconstraint}, respectively, in the proof of Theorem~\ref{prop:criteria}. These equations together with $\hat{z}_0$, as described above, suggests that
$$\begin{bmatrix} \hat{y}\\ \hat{u} \end{bmatrix} = \begin{bmatrix}M(j\omega_0)^* \\ -N(j\omega_0)^* \end{bmatrix}\hat{x}_0,$$
i.e., $KM(j\omega_0)^* = -N(j\omega_0)^*$. Note that, for a given $\omega_0 \in \mathbb{R}$, $K$ could be complex valued. In practice, such a controller is not implementable. Therefore, only real-valued $K$ are considered, as enforced by the constraint $K\in RL_{\infty}$. However, given $K \in \mathbb{C}^{m\times k}$ that fulfils $KM^*(j\omega_0) = -N^*(j\omega_0)$ one could construct a dynamic controller $K(s)$ through interpolation for which $K(j\omega_0)M^*(j\omega_0) = -N^*(j\omega_0)$. However, we do not consider that in the results presented here. 

Note that the closed-loop system with the proposed controller achieves its $L_2$-gain at frequency $\omega_0$, if the controller is optimal. From a disturbance point of view this means that the worst-case disturbances are those of frequency $\omega_0$. In order to be able to follow the recipe described in Section~\ref{sec:recipe}, one needs to choose a $\omega_0 \in \mathbb{R}$, from which $K$ is defined. If $\omega_0$ is chosen from the set 
\begin{equation}
\arg \max_{\omega \in \mathbb{R}} \| \begin{bmatrix} M(j\omega) & -N(j\omega)\end{bmatrix}^{\dagger}\|,
\end{equation}
 the criterion
$$\omega_0 \in \arg \max_{\omega \in \mathbb{R}} \, \left\|\begin{bmatrix}I\\K \end{bmatrix}(M(j\omega)-N(j\omega)K)^{-1}\right\|,$$
in Theorem~\ref{prop:criteria} is implicitly fulfilled. Then, one only needs to check that $(M-NK)^{-1} \in RH_{\infty}$. In fact, the latter criterion is then necessary and sufficient for $K$ to be optimal.

Lower bounds, similar to that considered in the proof of Theorem~\ref{prop:criteria}, i.e., 
\begin{equation*} 
\inf_{K\in RL_{\infty}} \; \left\|\begin{bmatrix}I\\K \end{bmatrix}(M-NK)^{-1} \right\|_{\infty} \geq \left\|\begin{bmatrix} M & -N\end{bmatrix}^{\dagger}\right\|_{L_{\infty}}, 
\end{equation*}
can be constructed for systems and problems not necessarily on the form consider in this paper. However, we are interested in the class of systems for which the inequality is in fact an equality. For the considered systems and problem form, this class includes many interesting engineering problems, as was made evident through examples in the previous section. Further, a lower bound as the one given above, can be used to investigate fundamental limitations of the performance in networked systems. This was utilized in \cite{pates2017control} to state fundamental limitations of the performance of vehicle~platoons.

\section{Comparison to Other Methods for Distributed Control and General $H_{\infty}$ synthesis}
\label{sec:discussion}

This section compares the presented results to general $H_{\infty}$ synthesis and some existing approaches to distributed control.

\subsection{Comparison to general purpose numerical synthesis}
In \cite{doyle1989state}, a state-space approach for general $H_{\infty}$ synthesis was first stated. With this method, one can compute the solution to the $H_{\infty}$ synthesis problem numerically. This is in general the case for conventional methods to $H_{\infty}$ synthesis, see e.g., \cite{stoorvogel1992h,dullerud2013course,zhou1996robust,doyle1989state,gahinet1994linear}. 

It is not easy to relate the numerical solution given by the approach in \cite{doyle1989state} to the structure of the considered system. In fact, almost regardless of the sparsity of the system, the synthesized controller is often dense. This is in contrast to the solution we propose. We will illustrate this by means of a simple example with a small order system. However, the point we try to make is true also for systems of larger order. 

Consider \eqref{gensys} with 
$M(s) = sI-A$ and $N(s)=B$ where 
\begin{align*}
A = \begin{bmatrix} -1 & 0 & 0\\ 0 & -3 & 0\\ 0 & 0 & -2\end{bmatrix} \text{ and } B = \begin{bmatrix}
-1 & 0 &0\\
1 &1 &-1\\
0 &0 &1
\end{bmatrix}.
\end{align*}
This system fulfils the requirements of Corollary~\ref{coro:sym} and the control law ${u = K x}$ with 
\begin{align*}
K = B^TA^{-1}= \begin{bmatrix}
1 &-\frac{1}{3} &0\\
0 &-\frac{1}{3} &0\\
0 &\frac{1}{3} &-\frac{1}{2}
\end{bmatrix}
\end{align*}
minimizes the closed-loop norm. The following state feedback matrix, denoted $K_{\text{num}}$, is also a feasible solution to the considered problem,
\begin{displaymath}
  K_{\text{num}}=\begin{bmatrix}
   0.93 & -0.11&
  0.00 \\   -0.05 &
   -0.17 &   -0.01 \\
  0.04 &
  0.16 &
  -0.26
\end{bmatrix}.
\end{displaymath}
However, it is numerically derived by the approach in \cite{doyle1989state}, i.e., bisection over an ARE-constraint until the minimal value of the norm is obtained. Note that $K_{\textrm{num}}$ is less sparse than $K=B^TA^{-1}$. 

\subsection{Comparison to $H_{\infty}$ control with structural constraints}

One approach to the problem of distributed control is to include structural constraints as part of the synthesis problem. In other words, to only consider a subset of the full class of stabilizing controllers $K$, that have certain structural properties. However, given a performance measure, this approach does not have to result in a convex optimization problem and, of course, not a globally optimal control law. 

Consider a system where $K = -N(j\omega_0)^*M(j\omega_0)^{-*}$ has been shown to be optimal. For clarity, we consider systems where $M$ is invertible. Then the following equality holds 
\begin{equation*}
\inf_{K\in RL_{\infty}} \; \left\|\begin{bmatrix}I\\K \end{bmatrix}(M-NK)^{-1} \right\|_{\infty} = \inf_{K\in \mathcal{S}} \; \left\|\begin{bmatrix}I\\K \end{bmatrix}(M-NK)^{-1} \right\|_{\infty}.
\end{equation*}
where \begin{multline*}\mathcal{S} = \{ K \in RL_{\infty}\; : \; K \textrm{ has the same zero pattern as }-N(j\omega_0)^*M(j\omega_0)^{-*} \}.\end{multline*}
That is, we could add this structural constraint to the optimization problem without loss of global optimality. Thus, our results illustrates one type of sparsity pattern for which global optimality is preserved, for the considered class of systems. 

\subsection{Comparison to synthesis for positive systems}
There are connections between the class of systems we consider and that of externally positive systems. Every externally positive system is also positively dominated. If $G$ is the transfer matrix of a positively dominated system then $\|G\|_{\infty} = \|G(0)\|$. This property is utilized in \cite{rantzer2015scalable} to solve the problem of $H_{\infty}$ control. It is shown that positively dominated system can be optimally controlled with static controllers that result from solving a linear program. The class of problems we consider in Section~\ref{sec:freqzero} shares this property, but gives naturally sparse controllers and involves no optimization. Internally positive systems are a subset of the externally positive systems. They are great candidates for models in e.g., biological and transportation systems. That is, any application where the quantity modelled is naturally nonnegative. It is well known that a system represented by the transfer function $G(s) = C(sI-A)^{-1}B+D$ is internally positive if and only if $A$ is a Metzler-matrix and $B,C,D\geq 0$. The closed-loop system of the buffer network described in Example~\ref{ex:buffer},  or more generally the system treated in Proposition~\label{prop:buffer}, is internally positive. That is, the optimal controller renders an internally positive closed-loop system. 

\subsection{Comparison to spatially invariant systems}
In \cite{bamieh2002distributed}, it is shown that solutions to $H_{\infty}$-type AREs for spatially invariant systems are translation invariant. Thus, the resulting control law is also spatially invariant. This is true for the control law we propose as well, as will be illustrated by the following simple example. Consider
\begin{equation} \label{sysSI}
\dot{x} = \underbrace{\begin{bmatrix} -3 & 1 & 0 & 1 \\ 1 & -3 & 1 & 0 \\ 0 &1 & -3 & 1 \\ 1 & 0 &1 & -3 \end{bmatrix}}_{\eqqcolon A} x+u+w,
\end{equation}
with $x,u,w \in \mathbb{R}^4$. It is spatially invariant by the definition in \cite{bamieh2002distributed} as $A$ is translation invariant. Notice that $A=A^T \prec 0$. The optimal control law proposed by Corollary~\ref{coro:sym} is given by 
$u = A^{-1}x$, which is also spatially invariant. 

While the theory in \cite{bamieh2002distributed} specifies when the system's property of spatial invariance is preserved in the control law, it does not give closed-form expressions of the control law. This is in contrast to our work. However, the spatially invariant control laws in \cite{bamieh2002distributed} can be effectively computed by a family of low order problems across the spatial frequency.  Further, our results are not restricted to spatially invariant systems. On the other hand, \cite{bamieh2002distributed} covers additional performance measures to the $H_{\infty}$ norm. The results presented in this paper as well as those given in \cite{bamieh2002distributed} showcase controllers that provide \textit{global} performance guarantees and stability. This is in contrast to the approaches to distributed control previously covered. 

\section{Conclusions and future works}
We give a simple closed-form expression for a $H_{\infty}$ feedback law and the criteria for it to be optimal. It is illustrated through examples, that the proposed control law is suitable for control of large-scale systems. Systems for which the criteria are readily fulfilled are specified. 

The results do not treat the problem of $H_{\infty}$ output feedback. However, it is natural to ask if an optimal control law can be given on a simple closed form also in this case. Attempts suggests that the structure of the controller is quite involved. Further investigations are needed to determine if there exist close to optimal controllers with similar sparsity patterns as the ones treated here. Further directions for extensions of the presented work includes to consider non-linearities, such as saturation constraints and time delays, as well as time-invariant~systems.

\section{Acknowledgement of Financial Support}
This research was supported by the Swedish Research Council through the LCCC Linnaeus Center and by the Swedish Foundation for Strategic Research through the project ICT-Psi. 

\bibliographystyle{plain}
\bibliography{ref2.bib,ref.bib}

\begin{bibdiv}
\begin{biblist}

\bib{bamieh2002distributed}{article}{
  title={Distributed control of spatially invariant systems},
  author={Bamieh, Bassam and Paganini, Fernando and Dahleh, Munther A},
  journal={IEEE Transactions on Automatic Control},
  volume={47},
  number={7},
  pages={1091--1107},
  year={2002},
  publisher={IEEE}
}

\bib{bamieh2005convex}{article}{
  title={A convex characterization of distributed control problems in spatially invariant systems with communication constraints},
  author={Bamieh, Bassam and Voulgaris, Petros G},
  journal={Systems \& Control Letters},
  volume={54},
  number={6},
  pages={575--583},
  year={2005},
  publisher={Elsevier}
}

\bib{delvenne2006price}{article}{
  title={The price of distributed design in optimal control},
  author={Delvenne, Jean-Charles and Langbort, Cedric},
  booktitle={2006 45th IEEE Conference on Decision and Control},
  pages={3640--3645},
  year={2006},
  organization={IEEE}
}

\bib{dhingra2016sparsity}{article}{
  title={Sparsity-promoting optimal control of systems with invariances and symmetries},
  author={Dhingra, Neil K and Wu, Xiaofan and Jovanovi{\'c}, Mihailo R},
  journal={IFAC-PapersOnLine},
  volume={49},
  number={18},
  pages={636--641},
  year={2016},
  publisher={Elsevier}
}

\bib{doyle1989state}{article}{
  title={State-space solutions to standard {H}-2 and {H}-infinity control problems},
  author={Doyle, John C and Glover, Keith and Khargonekar, Pramod P and Francis, Bruce and others},
  journal={IEEE Transactions on Automatic Control},
  volume={34},
  number={8},
  pages={831--847},
  year={1989},
  publisher={IEEE}
}

\bib{dullerud2013course}{book}{
  title={A course in robust control theory: a convex approach},
  author={Dullerud, Geir E and Paganini, Fernando},
  volume={36},
  year={2013},
  publisher={Springer Science \& Business Media}
}

\bib{gahinet1994linear}{article}{
  title={A linear matrix inequality approach to {H}-infinity control},
  author={Gahinet, Pascal and Apkarian, Pierre},
  journal={International journal of robust and nonlinear control},
  volume={4},
  number={4},
  pages={421--448},
  year={1994},
  publisher={Wiley Online Library}
}

\bib{lax}{book}{
  title={Functional Analysis},
  author={Lax, Peter D.},
  year={2002},
  publisher={John Wiley \& Sons, Inc.}
}

\bib{lessard2016convexity}{article}{
  title={Convexity of decentralized controller synthesis},
  author={Lessard, Laurent and Lall, Sanjay},
  journal={IEEE Transactions on Automatic Control},
  volume={61},
  number={10},
  pages={3122--3127},
  year={2016},
  publisher={IEEE}
}

\bib{lidstromDiscrete}{article}{
author={Lidstr\"{o}m, Carolina and Pates, Richard and Rantzer, Anders.}, 
journal={Submitted to IEEE Conference on Decision and Control}, 
title={Distributed {H}-infinity  Optimal Control and Estimation of Discrete Time Systems}, 
year={2017}}

\bib{lidstrom2016optimal}{article}{
  title={Optimal H-infinity state feedback for systems with symmetric and Hurwitz state matrix},
  author={Lidstr{\"o}m, Carolina and Rantzer, Anders},
  booktitle={American Control Conference (ACC), 2016},
  pages={3366--3371},
  year={2016},
  organization={IEEE}
}

\bib{mareels2005systems}{article}{
  title={Systems engineering for irrigation systems: Successes and challenges},
  author={Mareels, Iven and Weyer, Erik and Ooi, Su Ki and Cantoni, Michael and Li, Yuping and Nair, Girish},
  journal={IFAC Proceedings Volumes},
  volume={38},
  number={1},
  pages={1--16},
  year={2005},
  publisher={Elsevier}
}

\bib{newman2010networks}{book}{
  title={Networks: an introduction},
  author={Newman, Mark},
  year={2010},
  publisher={Oxford University Press},
  page={124}
}

\bib{pates2017control}{article}{
  title={Control using local distance measurements cannot prevent incoherence in platoons},
  author={Pates, Richard and Lidstr{\"o}m, Carolina and Rantzer, Anders},
  booktitle={Decision and Control (CDC), 2017 IEEE 56th Annual Conference on},
  pages={3461--3466},
  year={2017},
  organization={IEEE}
}

\bib{pates2016scalable}{article}{
  title={Scalable Design of Heterogeneous Networks},
  author={Pates, Richard and Vinnicombe, Glenn},
  journal={IEEE Transactions on Automatic Control},
  year={2016},
  publisher={IEEE}
}

\bib{rantzer2015scalable}{article}{
  title={Scalable control of positive systems},
  author={Rantzer, Anders},
  journal={European Journal of Control},
  volume={24},
  pages={72--80},
  year={2015},
  publisher={Elsevier}
}

\bib{rotkowitz2006characterization}{article}{
  title={A Characterization of Convex Problems in Decentralized Control},
  author={Rotkowitz, Michael and Lall, Sanjay},
  journal={IEEE Transactions on Automatic Control},
  volume={51},
  number={2},
  pages={274--286},
  year={2006},
  publisher={IEEE}
}

\bib{stoorvogel1992h}{book}{
  title={The {H}-infinity Control Problem: A State Space Approach},
  author={Stoorvogel, Anton A},
  year={1992},
  publisher={Prentice Hall, New Jersey}
}

\bib{tanaka2011bounded}{article}{
  title={The bounded real lemma for internally positive systems and {H}-infinity structured static state feedback},
  author={Tanaka, Takashi and Langbort, C{\'e}dric},
  journal={IEEE transactions on Automatic Control},
  volume={56},
  number={9},
  pages={2218--2223},
  year={2011},
  publisher={Institute of Electrical and Electronics Engineers}
}

\bib{wang2016system}{article}{
  title={A system level approach to controller synthesis},
  author={Wang, Yuh-Shyang and Matni, Nikolai and Doyle, John C},
  journal={IEEE Transactions on Automatic Control},
  year={2019},
  publisher={IEEE}
}
\bib{wang2017separable}{article}{
  title={Separable and Localized System-Level Synthesis for Large-Scale Systems},
  author={Wang, Yuh-Shyang and Matni, Nikolai and Doyle, John C},
  journal={IEEE Transactions on Automatic Control},
  volume={63},
  number={12},
  pages={4234--4249},
  year={2018},
  publisher={IEEE}
}

\bib{zhou1996robust}{book}{
  title={Robust and optimal control},
  author={Zhou, Kemin and Doyle, John Comstock and Glover, Keith and others},
  volume={40},
  year={1996},
  publisher={Prentice {H}all, New Jersey}
}

\end{biblist}
\end{bibdiv}

\end{document}